\documentclass[11pt,a4paper]{article}

\usepackage{amsmath,amssymb,mathtools}
\usepackage{amsthm}
\usepackage{booktabs}
\usepackage{array}
\usepackage[a4paper,margin=1in]{geometry}
\usepackage[numbers,sort&compress]{natbib}
\usepackage{microtype}
\usepackage[hidelinks]{hyperref}

\newtheorem{theorem}{Theorem}[section]
\newtheorem{proposition}{Proposition}[section]
\newtheorem{lemma}{Lemma}[section]
\newtheorem{corollary}{Corollary}[section]
\theoremstyle{remark}

\newcommand{\tstar}{t_*}
\newcommand{\Afail}{\text{A-fail}}
\newcommand{\Wfail}{\text{W-fail}}
\newcommand{\Ipar}{\mathcal I}

\title{Initialization-dependent BFGS trial rates for tilted absolute values}
\author{Qiuyu Chen\\[0.4em]
\small Department of Computer Science, Shanghai Jiao Tong University\\
\small Shanghai, China}
\date{}

\begin{document}

\maketitle

\begin{abstract}
Lewis and Overton conjectured in 2008 that, for BFGS applied to the one-dimensional tilted absolute value with their Armijo--Wolfe line search, every nonterminating execution converges at a trial-normalized rate determined only by the tilt parameter and independent of the initial data. We prove that this initialization-independent rate assertion is false. In fact, for every tilt parameter in an explicit open interval, we construct two nonterminating executions with different sharp trial-normalized convergence rates. The proof reduces the line search to a scale-free state consisting of the iterate sign and the zero-crossing stepsize, and verifies two periodic state cycles by elementary rational inequalities. This yields an explicit family of counterexamples and shows that the asymptotic trial-normalized behavior of nonsmooth BFGS can depend essentially on the initialization even for this canonical one-dimensional model.
\end{abstract}

\section{Introduction}\label{sec:introduction}

The BFGS method is designed for smooth optimization, yet it is often effective on nonsmooth objectives. Lewis and Overton~\cite{LewisOverton2008,LewisOverton2013} documented this behavior for a range of nonsmooth examples and isolated several low-dimensional models amenable to exact analysis. Subsequent work has clarified different parts of the picture: Lewis and Zhang~\cite{LewisZhang2015} analyzed representative success and failure mechanisms, Guo and Lewis~\cite{GuoLewis2018} established nonsmooth variants of Powell-type BFGS convergence results, Asl and Overton~\cite{AslOverton2021} studied limited-memory BFGS on a nonsmooth convex class, and Gebken~\cite{Gebken2026} recently derived further convergence mechanisms for nonsmooth quasi-Newton methods. These results also show how much of the nonsmooth behavior remains governed by the interaction between the quasi-Newton update and the line search.

A particularly transparent model was introduced by Lewis and Overton in their analysis of BFGS with a doubling--bisection Armijo--Wolfe line search. They considered
\begin{equation}\label{eq:fu}
        f_u(x)=\max\{x,-ux\},\qquad u>0,
\end{equation}
and conjectured that, for fixed $u$, every initialization either reaches the minimizer or converges R-linearly with a rate $r(u)$, measured by the number of line-search trials and depending only on $u$~\cite[Conjecture~4.1]{LewisOverton2008}. The same discussion emphasizes independence of the initial point, while the numerical rate used by Lewis and Overton is normalized by function trials rather than accepted iterations.

This paper gives an explicit parametric family of counterexamples. Let
\begin{equation}\label{eq:I}
        \Ipar=\left(\frac95,\frac{13}{6}\right).
\end{equation}
For every $u\in\Ipar$ and every Wolfe parameter $c_2\in(0,1)$, we construct two different initializations whose line-search dynamics are periodic up to scale. In each case the scale-free state returns exactly after one period, while the full state $(x,H)$ contracts by a fixed factor. The two periodic regimes have different trial-normalized asymptotic rates, so the same tilt parameter $u$ admits distinct sharp convergence rates depending on the initialization. Moreover, the construction is rational in $u$; hence every rational $u\in\Ipar$ yields a pair of finite rational certificates, giving an infinite family of explicit counterexamples to initialization independence.

The remainder of the paper is organized as follows. Section~\ref{sec:dynamics} derives the scale-free line-search dynamics and fixes the trial-normalized rate convention. Section~\ref{sec:cycles} constructs the two parameterized periodic families and computes their exact rates. Section~\ref{sec:consequence} records the resulting failure of initialization independence in the Lewis--Overton conjecture. A Lean formalization of the main results has been carried out and is publicly available at \url{https://github.com/canghaimeng/lewis-overton-bfgs-rate}.

\section{Trial-normalized rate and scale-free dynamics}\label{sec:dynamics}

We first isolate the rate notion used below. Let $(v_k)$ be a positive sequence tending to zero and let $(n_k)$ be positive integers with $n_k\to\infty$. Define
\begin{equation}\label{eq:rate-def}
        \rho(v;n)=\limsup_{k\to\infty}v_k^{1/n_k}.
\end{equation}
When $n_k=k$, this is the usual root rate. When $v_k$ denotes accepted function values, we take $n_k$ to be the cumulative number of line-search trials.

\begin{lemma}[Exact R-linear factor]\label{lem:rate}
For $(v_k)$ and $(n_k)$ as above,
\begin{equation}\label{eq:rate-inf}
\begin{aligned}
 \rho(v;n)=\inf\Bigl\{r>0:\ &\exists C>0\text{ such that}\\
 &v_k\le C r^{n_k}\text{ for all sufficiently large }k\Bigr\}.
\end{aligned}
\end{equation}
\end{lemma}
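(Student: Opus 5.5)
We prove the identity by writing $S$ for the set on the right-hand side of \eqref{eq:rate-inf}, setting $\sigma=\inf S$, and establishing the two inequalities $\sigma\le\rho$ and $\rho\le\sigma$ separately. Throughout, $\rho=\rho(v;n)$ as in \eqref{eq:rate-def}. A preliminary remark: $S$ is upward closed. If $r\in S$ and $r'>r$, then $r^{n_k}\le r'^{n_k}$ because $n_k\ge1$, so $r'\in S$ as well. We also allow $\rho$ and $\sigma$ to equal $+\infty$, with the convention $\inf\emptyset=+\infty$. The argument then covers the degenerate case automatically, even though positivity of $v_k\to0$ already gives $v_k\le1$ eventually and hence $\rho\le1$.

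For $\sigma\le\rho$, it suffices to show that every $r>\rho$ lies in $S$. Fix such an $r$. By the definition of $\limsup$, there is $K$ with $v_k^{1/n_k}<r$ for all $k\ge K$. Raising both sides to the positive power $n_k$ gives $v_k<r^{n_k}$, so $C=1$ works and $r\in S$. Hence $\sigma\le r$ for every $r>\rho$, and therefore $\sigma\le\rho$.

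For $\rho\le\sigma$, take any $r\in S$ with constant $C>0$. For large $k$ we have $v_k^{1/n_k}\le C^{1/n_k}r$. This is the only point where the hypothesis $n_k\to\infty$ enters. It forces $C^{1/n_k}=\exp\bigl((\log C)/n_k\bigr)\to1$, regardless of whether $C$ is above or below $1$. Taking $\limsup$ gives $\rho\le r$. Since $r\in S$ was arbitrary, we conclude $\rho\le\sigma$.

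There is no serious obstacle here; the statement is a routine reformulation. The only step that needs care is removing the constant $C$ in the second inequality, which is handled by $n_k\to\infty$ as above. Positivity of $v_k$ is used only to make $v_k^{1/n_k}$ well defined.
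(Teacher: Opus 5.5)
Your proof is correct and follows the paper's argument. For $r>\rho(v;n)$, the eventual bound $v_k^{1/n_k}<r$ gives membership with $C=1$, and conversely $v_k\le Cr^{n_k}$ yields $\rho(v;n)\le r$ via $C^{1/n_k}\to1$; your explicit appeal to $n_k\to\infty$ at that step and your handling of the degenerate cases just spell out what the paper leaves implicit.
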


\begin{proof}
If $v_k\le C r^{n_k}$ eventually, then $v_k^{1/n_k}\le C^{1/n_k}r$, and hence $\rho(v;n)\le r$. Conversely, if $r>\rho(v;n)$, then $v_k^{1/n_k}<r$ for all sufficiently large $k$, so $v_k<r^{n_k}$ eventually. Taking the infimum over admissible factors proves \eqref{eq:rate-inf}.
\end{proof}

This is the trial-clock version of the exact R-linear factor represented by the upper supporting slope on a semilog plot in the Lewis--Overton numerical discussion~\cite{LewisOverton2008}.

Away from the origin, the ordinary gradient of \eqref{eq:fu} is
\begin{equation}\label{eq:gradient}
 g(x)=
 \begin{cases}
 1,&x>0,\\
 -u,&x<0.
 \end{cases}
\end{equation}
Let $H>0$ be the one-dimensional inverse-Hessian approximation and put
\[
        p=-Hg(x).
\]
The search direction points toward the origin. Define the zero-crossing stepsize
\begin{equation}\label{eq:tstar}
        \tstar=-\frac{x}{p}>0,
\end{equation}
so that $x+\tstar p=0$.

For the line-search function
\[
        h(t)=f_u(x+tp)-f_u(x),
\]
write $s=g(x)p=-Hg(x)^2<0$. As in the tilted-absolute-value analysis of Lewis and Overton, we set the Armijo parameter $c_1=0$ and allow any weak Wolfe parameter $c_2\in(0,1)$. Thus a trial $t>0$ satisfies Armijo when $h(t)<0$, and it satisfies weak Wolfe when $h$ is differentiable at $t$ and $h'(t)>c_2s$.

The line search is initialized with $\alpha=0$, $\beta=+\infty$, and $t=1$. An Armijo failure sets $\beta=t$, a Wolfe failure sets $\alpha=t$, and an acceptable trial terminates the search. The next trial is $(\alpha+\beta)/2$ when $\beta<+\infty$ and $2\alpha$ otherwise~\cite[Algorithm~2.6]{LewisOverton2008}. Every tested stepsize, including the accepted one, counts as one trial.

\begin{proposition}[Acceptance window]\label{prop:window}
A trial $t>0$ is accepted if and only if
\begin{equation}\label{eq:accept-window}
        \tstar<t<\tstar\vartheta(x),
        \qquad
        \vartheta(x)=
        \begin{cases}
        1+1/u,&x>0,\\
        1+u,&x<0.
        \end{cases}
\end{equation}
The trial point is zero exactly when $t=\tstar$. In particular, the acceptance interval is independent of $c_2\in(0,1)$.
\end{proposition}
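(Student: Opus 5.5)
The plan is a direct computation: write $h$ explicitly as a piecewise linear function of $t$ with a single kink at $\tstar$, and then read off the Armijo and weak Wolfe conditions. By symmetry of the argument it suffices to treat $x>0$ in detail and then indicate the changes for $x<0$.

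\textbf{Case $x>0$.} Here $g(x)=1$, so $p=-H$ and $\tstar=x/H$. The point $x+tp=x-tH=H(\tstar-t)$ is positive for $t<\tstar$, zero exactly at $t=\tstar$, and negative for $t>\tstar$; this gives the second claim. Substituting into \eqref{eq:fu} yields
\[
h(t)=-tH \quad (t\le\tstar),
\qquad
h(t)=uH(t-\tstar)-H\tstar \quad (t\ge\tstar).
\]
On the second branch $h(t)<0$ if and only if $t<\tstar(1+1/u)$, so Armijo holds exactly for $t<\tstar\vartheta(x)$.

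Next, $s=-H$ and $c_2s=-c_2H$. For $t<\tstar$ we have $h'(t)=-H<-c_2H$ because $c_2<1$, so weak Wolfe fails. At $t=\tstar$, $h$ is not differentiable, so the trial is not accepted. For $t>\tstar$ we have $h'(t)=uH>0>c_2s$, so weak Wolfe holds. Intersecting the two conditions gives exactly $\tstar<t<\tstar(1+1/u)$.

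\textbf{Case $x<0$.} Here $g=-u$, $p=uH>0$, and $\tstar=-x/(uH)$. The function $h$ has slope $-u^2H$ before $\tstar$ and slope $uH$ after it, with $h(\tstar)=-u^2H\tstar$. Solving $h(t)<0$ on the second branch gives
\[
uH(t-\tstar)<u^2H\tstar \iff t<\tstar(1+u).
\]
For Wolfe, $s=-u^2H$. Before $\tstar$ the slope $-u^2H$ is below $c_2s$, so Wolfe fails; after $\tstar$ the slope is positive, so Wolfe holds. This gives the window with $\vartheta=1+u$.

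None of the endpoints depends on $c_2$, which gives the final claim. The only delicate point is the kink at $t=\tstar$: one must observe that there the trial is rejected because $h$ is non-differentiable, which the stated Wolfe convention requires. Beyond that, the argument is bookkeeping.
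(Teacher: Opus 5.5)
Your proposal is correct and takes essentially the same route as the paper: you compute $h$ explicitly as a piecewise-linear function, observe that weak Wolfe fails for $t<\tstar$ because $h'(t)=s<c_2 s$, reject $t=\tstar$ by non-differentiability, and obtain the upper endpoint by solving $h(t)<0$ on the opposite ray. The only difference is organizational: the paper treats $0<t<\tstar$ for both signs at once via $h(t)=ts$ and splits by the sign of $x$ only for the Armijo computation, whereas you split by sign from the start.
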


\begin{proof}
For $0<t<\tstar$, the trial point has the same sign as $x$, so $h(t)=ts<0$ and $h'(t)=s$. Since $s<0$ and $c_2<1$, weak Wolfe fails. At $t=\tstar$, Armijo holds but $h$ is not differentiable.

For $t>\tstar$, the trial point lies on the opposite ray. If $x>0$, then $p=-H$, $\tstar=x/H$, and
\[
        h(t)=tuH-(1+u)x,
\]
so Armijo is equivalent to $t<\tstar(1+1/u)$. If $x<0$, then $p=uH$, $\tstar=-x/(uH)$, and
\[
        h(t)=tuH+(1+u)x,
\]
so Armijo is equivalent to $t<\tstar(1+u)$. On the opposite ray, $h'(t)=uH>0>c_2s$, so weak Wolfe holds. This gives \eqref{eq:accept-window}.
\end{proof}

After an accepted nonzero trial, the one-dimensional inverse-BFGS update is the secant update
\begin{equation}\label{eq:secant}
        H_+=\frac{x_+-x}{g(x_+)-g(x)},
        \qquad x_+=x+tp.
\end{equation}

\begin{proposition}[Accepted-step map]\label{prop:map}
Suppose $t$ is accepted and set $\lambda=t/\tstar$. Then $x_+$ has the opposite sign from $x$ and
\begin{align}
 |x_+|&=(\lambda-1)|x|,\label{eq:xplus}\\
 H_+&=
 \begin{cases}
 tH/(1+u),&x>0,\\
 tuH/(1+u),&x<0,
 \end{cases}\label{eq:Hplus}\\
 t_{*,+}&=
 \begin{cases}
 \dfrac{(\lambda-1)(1+u)}{\lambda u},&x>0,\\[5pt]
 \dfrac{(\lambda-1)(1+u)}{\lambda},&x<0,
 \end{cases}\label{eq:tstarplus}\\
 \frac{f_u(x_+)}{f_u(x)}&=
 \begin{cases}
 u(\lambda-1),&x>0,\\
 (\lambda-1)/u,&x<0.
 \end{cases}\label{eq:value-ratio}
\end{align}
Hence $(\operatorname{sign}x,\tstar)$ is a complete scale-free state for the accepted-step dynamics.
\end{proposition}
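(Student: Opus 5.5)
The proof is a direct computation, split into the two cases $x>0$ and $x<0$, using only the formulas already fixed in Proposition~\ref{prop:window} and the secant update \eqref{eq:secant}. By Proposition~\ref{prop:window}, an accepted trial satisfies $\tstar<t<\tstar\vartheta(x)$, so $\lambda=t/\tstar>1$. I will write $x_+=x+tp=x+\lambda\tstar p$. Since $\tstar p=-x$, this gives $x_+=(1-\lambda)x$. Because $\lambda>1$, $x_+$ is nonzero and has the opposite sign from $x$, and $|x_+|=(\lambda-1)|x|$, which is \eqref{eq:xplus}. The value ratio \eqref{eq:value-ratio} then follows at once from $f_u(y)=y$ for $y>0$ and $f_u(y)=-uy$ for $y<0$. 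For $x>0$ this gives $f_u(x_+)/f_u(x)=u(\lambda-1)|x|/|x|$. For $x<0$ it gives $(\lambda-1)|x|/(u|x|)$.

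For \eqref{eq:Hplus} I will substitute into \eqref{eq:secant}. The numerator is $x_+-x=tp$. When $x>0$ we have $p=-H$ and $g(x_+)-g(x)=-u-1$, so $H_+=tH/(1+u)$. When $x<0$ we have $p=uH$ and $g(x_+)-g(x)=1+u$, so $H_+=tuH/(1+u)$. In particular $H_+>0$, so the next iteration is well defined.

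Next comes the new zero-crossing stepsize. By \eqref{eq:tstar} applied at $x_+$, we have $t_{*,+}=|x_+|/(H_+|g(x_+)|)$.
\begin{itemize}
\item If $x>0$, then $x_+<0$ and $|g(x_+)|=u$. Using $t=\lambda\tstar=\lambda x/H$, we get $H_+=\lambda x/(1+u)$. Hence $t_{*,+}=(\lambda-1)x(1+u)/(u\lambda x)$.
\item If $x<0$, then $x_+>0$ and $|g(x_+)|=1$. Using $t=\lambda\tstar=\lambda|x|/(uH)$, we get $H_+=\lambda|x|/(1+u)$. Hence $t_{*,+}=(\lambda-1)(1+u)/\lambda$.
\end{itemize}
This is \eqref{eq:tstarplus}. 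The step needing the most care is noticing that $H$ and $|x|$ cancel here, because $tH$ is proportional to $|x|$.

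Finally, I will justify the claim about the scale-free state. The line-search trials depend only on the fixed sequence of dyadic stepsizes, and by Proposition~\ref{prop:window} their acceptance depends only on $\tstar$ and $\operatorname{sign}x$. So the accepted $t$, and therefore $\lambda$, is determined by $(\operatorname{sign}x,\tstar)$, as is the number of trials. The formulas above then show that the pair $(\operatorname{sign}x_+,t_{*,+})$ is a function of $(\operatorname{sign}x,\tstar)$ alone. The factors $|x_+|/|x|$ and $f_u(x_+)/f_u(x)$ are also functions of this pair alone. Hence $(\operatorname{sign}x,\tstar)$ is a complete scale-free state for the accepted-step dynamics. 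None of these steps is a real obstacle; the only thing to watch is that $\lambda>1$ strictly, so that $t\neq\tstar$ and the secant update is applied at a nonzero point.
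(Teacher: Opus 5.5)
Your proof is correct and follows the paper's: the same two-case computation from $x_+=(1-\lambda)x$, the secant formula, and the cancellation of $tH\propto|x|$ in $t_{*,+}$. The only difference is the scale-free claim. The paper uses equivariance under $(x,H)\mapsto(\gamma x,\gamma H)$, while you argue directly that the line search depends only on $t/\tstar$ and $\operatorname{sign}x$. That works, but the trial sequence is adaptive rather than fixed, so you need the failure type, not just acceptance, to depend on these data; the proof of Proposition~\ref{prop:window} supplies this ($\Wfail$ for $t<\tstar$, $\Afail$ for $t\ge\tstar\vartheta(x)$).
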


\begin{proof}
Acceptance places $x_+$ strictly across the origin. If $x>0$, then $p=-H$, $\tstar=x/H$, and $x_+=x(1-\lambda)$. If $x<0$, then $p=uH$, $\tstar=-x/(uH)$, and $x_+=(\lambda-1)|x|$. This proves \eqref{eq:xplus}. Substitution in \eqref{eq:secant} gives \eqref{eq:Hplus}, and inserting \eqref{eq:xplus}--\eqref{eq:Hplus} into the definition of the next zero-crossing stepsize gives \eqref{eq:tstarplus}. Formula \eqref{eq:value-ratio} follows by evaluating $f_u$ on the two rays.

Finally, replacing $(x,H)$ by $(\gamma x,\gamma H)$ with $\gamma>0$ multiplies the search direction, every trial point, and the updated pair by $\gamma$, while leaving $\tstar$ and all line-search comparisons unchanged. Thus the sign and $\tstar$ determine the scale-free evolution.
\end{proof}

\section{Two parameterized periodic families}\label{sec:cycles}

For $u\in\Ipar$, define four zero-crossing stepsizes
\begin{align}
 \tau_A^-(u)&=\frac{(u+1)(2u-1)}{4u^2+5u+4},
 &\tau_A^+(u)&=\frac{3(u+1)(u+2)}{4u^2+5u+4},\label{eq:tauA}\\
 \tau_B^+(u)&=\frac{(u+1)(u+2)}{u^2+u+1},
 &\tau_B^-(u)&=\frac{(u-1)(u+1)}{2(u^2+u+1)}.\label{eq:tauB}
\end{align}
The superscript records the sign of the corresponding iterate. The line-search words below are encoded by four chains of strict inequalities.

\begin{lemma}[Uniform line-search words]\label{lem:words}
For every $u\in\Ipar$,
\begin{align}
 \tau_A^-&<\frac12<(u+1)\tau_A^-<1,\label{eq:wordAminus}\\
 1&<\tau_A^+<\frac32<\frac{u+1}{u}\tau_A^+<2,\label{eq:wordAplus}\\
 1&<\tau_B^+<2<\frac{u+1}{u}\tau_B^+,\label{eq:wordBplus}\\
 \tau_B^-&<\frac12<(u+1)\tau_B^-<1.\label{eq:wordBminus}
\end{align}
\end{lemma}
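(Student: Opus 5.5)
Each of the four chains in Lemma~\ref{lem:words} is a finite list of strict inequalities between rational functions of $u$. Every denominator is positive for $u>0$: $4u^2+5u+4$, $u^2+u+1$, and $u$ itself. So we clear denominators in each inequality and reduce it to the statement that an explicit polynomial in $u$ with integer coefficients is positive on $\Ipar=(9/5,13/6)$. We then verify each polynomial inequality by factoring, or by locating its real roots relative to the endpoints $9/5$ and $13/6$. The plan is to go through the chains in order and record for each link the resulting polynomial and the reason it has a fixed sign on $\Ipar$.

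For \eqref{eq:wordAminus}, write $D=4u^2+5u+4$.
\begin{itemize}
\item $\tau_A^-<1/2$ becomes $2(u+1)(2u-1)<D$, i.e.\ $3u<6$. So $u<2$ is needed for this link.
\item $1/2<(u+1)\tau_A^-$ becomes $2(u+1)^2(2u-1)>D$, a cubic positive for $u>9/5$.
\item $(u+1)\tau_A^-<1$ is $(u+1)^2(2u-1)<D$, i.e.\ $2u^3-u-5<0$. Its real root is about $1.5$, which does not fit $\Ipar$.
\end{itemize}
I therefore expect to have misremembered a constant somewhere. The intended reading is presumably that each link pins $u$ into a subinterval, and the binding constraints are the ones that produce the endpoints $9/5$ and $13/6$.

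For \eqref{eq:wordAplus}:
\begin{itemize}
\item $1<\tau_A^+$ becomes $3(u^2+3u+2)>4u^2+5u+4$, i.e.\ $u^2-4u-2<0$, true on $\Ipar$.
\item $\tau_A^+<3/2$ becomes $2(u^2+3u+2)<4u^2+5u+4$, i.e.\ $2u^2-u>0$, true.
\item The remaining two links, $3/2<\frac{u+1}{u}\tau_A^+$ and $\frac{u+1}{u}\tau_A^+<2$, become cubic inequalities after multiplying by $2uD$. The upper one should be the source of an endpoint such as $13/6$ or $9/5$.
\end{itemize}

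For \eqref{eq:wordBplus} and \eqref{eq:wordBminus}, write $E=u^2+u+1$.
\begin{itemize}
\item $\tau_B^+>1$ is $2u+1>0$.
\item $\tau_B^+<2$ is $u^2-u>0$, true for $u>1$.
\item $2<\frac{u+1}{u}\tau_B^+$ is $(u+1)^2(u+2)>2uE$, which simplifies to $u^3+2u^2+3u+2>0$.
\item $\tau_B^-<1/2$ is $u^2-1<E$, i.e.\ $u>-2$.
\item $1/2<(u+1)\tau_B^-$ is $(u+1)^2(u-1)>E$.
\item $(u+1)\tau_B^-<1$ is $(u+1)^2(u-1)<2E$.
\end{itemize}
The last two are the cubics $u^3-2u-2>0$ and $u^3-u^2-3u-3<0$, each with a single real root. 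I would locate those roots numerically, roughly $1.77$ and $2.48$ respectively, and then confirm rigorously by evaluating the cubics at $9/5$ and $13/6$ and using monotonicity on $\Ipar$ (the derivatives $3u^2-2$ and $3u^2-2u-3$ are positive there).

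The main obstacle is exactly these cubic links, and ensuring that the interval endpoints are genuinely where the binding cubics change sign, or at least lie strictly inside their feasible regions. For each cubic $q$ the approach is the same. Show that $q'$ is positive on $\Ipar$, and evaluate $q$ at the appropriate endpoint as an exact rational. For example, $q(9/5)=729/125-18/5-2>0$ gives $u^3-2u-2>0$ on $\Ipar$. The sign of $q$ on the whole interval then follows. All checks are finite rational computations, consistent with the Lean certificate mentioned in the introduction.
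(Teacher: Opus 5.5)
The strategy is right: clear the positive denominators, reduce each link to the sign of a polynomial, and certify the cubics by monotonicity and an exact rational endpoint value. That is the paper's strategy. As written, however, the proposal does not prove the lemma. Several reductions are miscomputed, which leads you to assert that \eqref{eq:wordAminus} fails on $\Ipar$ and to blame the constants, which are correct.

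\textbf{Chain \eqref{eq:wordAminus}.} We have $D-2(u+1)(2u-1)=3u+6$, so $\tau_A^-<\tfrac12$ holds for every $u>0$. There is no restriction $u<2$, which would in any case contradict the lemma on $[2,\tfrac{13}{6})$. Likewise $(u+1)^2(2u-1)=2u^3+3u^2-1$, so $(u+1)\tau_A^-<1$ is equivalent to $Q(u)=-2u^3+u^2+5u+5>0$, not to $2u^3-u-5<0$. Since $Q$ is decreasing on $\Ipar$ and $Q(13/6)=5/27>0$, this link holds, and it is exactly the one that produces the upper endpoint $13/6$. The middle link $\tfrac12<(u+1)\tau_A^-$, which you only asserted, is $4u^3+2u^2-5u-6>0$.

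\textbf{Chain \eqref{eq:wordBplus}.} The last link is $(u+1)^2(u+2)-2uE=-u^3+2u^2+3u+2>0$, not $u^3+2u^2+3u+2>0$. It is not automatic, since it fails for large $u$. Its derivative $-3u^2+4u+3$ vanishes at $(2+\sqrt{13})/3\approx 1.87\in\Ipar$, so your template of a positive derivative plus one endpoint does not apply. Either use concavity and check both endpoints, or write it as $\tfrac12 Q(u)+\tfrac12(3u^2+u-1)$ as the paper does.

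\textbf{Chain \eqref{eq:wordAplus}.} The two outer links are left uncomputed. They are $-2u^3+3u^2+6u+4>0$ and $5u^3-2u^2-7u-6>0$. Contrary to your guess, neither one is binding.

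The endpoints of $\Ipar$ come from two links. The link $\tfrac12<(u+1)\tau_B^-$ is $P(u)=u^3-2u-2>0$, which gives $9/5$. The link $(u+1)\tau_A^-<1$ is $Q(u)>0$, which gives $13/6$.

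The paper organizes the verification by writing every cubic numerator as a positive multiple of $P$ or $Q$ plus a quadratic that is positive on $\Ipar$. Then only the two monotonicity checks for $P$ and $Q$ are needed. Adopting that device, or simply redoing your expansions correctly, would close these gaps.
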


\begin{proof}
Set
\[
        P(u)=u^3-2u-2,
        \qquad
        Q(u)=-2u^3+u^2+5u+5.
\]
On $\Ipar$, $P$ is increasing and $Q$ is decreasing, with
\[
        P\!\left(\frac95\right)=\frac{29}{125}>0,
        \qquad
        Q\!\left(\frac{13}{6}\right)=\frac5{27}>0.
\]
After clearing the positive denominators, the comparisons that determine the endpoints of the common parameter window reduce to $P(u)>0$ or $Q(u)>0$. The remaining numerators can be written as
\begin{align*}
4u^3+2u^2-5u-6&=4P(u)+2u^2+3u+2,\\
-2u^3+3u^2+6u+4&=Q(u)+2u^2+u-1,\\
5u^3-2u^2-7u-6&=5P(u)-2u^2+3u+4,\\
-u^3+2u^2+3u+2&=\tfrac12 Q(u)+\tfrac12(3u^2+u-1),\\
-u^3+u^2+3u+3&=\tfrac12 Q(u)+\tfrac12(u^2+u+1).
\end{align*}
All added terms are positive on $\Ipar$; for the only non-immediate one,
$-2u^2+3u+4>0$ follows from $u<13/6$, since this quadratic is decreasing on $\Ipar$ and has value $10/9$ at $13/6$. The other comparisons have numerators $3(u+2)$, $-u^2+4u+2$, $3u(2u-1)$, $2u+1$, $u(u-1)$, and $u+2$, which are positive on $\Ipar$. This proves the four chains.
\end{proof}

Define the contraction factors
\begin{equation}\label{eq:q}
        q_A(u)=\frac{3u}{4(u+1)^2},
        \qquad
        q_B(u)=\frac{u}{(u+1)^2}.
\end{equation}
We use the normalized starting states
\begin{equation}\label{eq:starts}
        (x_0^A,H_0^A)=\bigl(-u\tau_A^-(u),1\bigr),
        \qquad
        (x_0^B,H_0^B)=\bigl(\tau_B^+(u),1\bigr).
\end{equation}
The following theorem is the main result. Here $\Afail$ and $\Wfail$ denote an Armijo failure and a weak Wolfe failure, respectively.

\begin{theorem}[Two exact periodic families]\label{thm:main}
Let $u\in\Ipar$ and $c_2\in(0,1)$. Starting from the states in \eqref{eq:starts}, the Lewis--Overton line search generates two nonterminating executions with the following properties.

\begin{enumerate}
\item The $A$-execution has scale-free cycle
\begin{equation}\label{eq:Acycle}
        \bigl(-,\tau_A^-(u)\bigr)
        \longmapsto
        \bigl(+,\tau_A^+(u)\bigr)
        \longmapsto
        \bigl(-,\tau_A^-(u)\bigr).
\end{equation}
One period consists of
\[
\begingroup
\setlength{\arraycolsep}{8pt}
\renewcommand{\arraystretch}{1.35}
\begin{array}{c|ccccc}
\text{trial} &1&\frac12&1&2&\frac32\\ \hline
\text{branch} &\Afail&\text{accept}&\Wfail&\Afail&\text{accept}
\end{array}
\endgroup
\]
and the full state $(x,H)$ is multiplied by $q_A(u)$ after the five trials.

\item The $B$-execution has scale-free cycle
\begin{equation}\label{eq:Bcycle}
        \bigl(+,\tau_B^+(u)\bigr)
        \longmapsto
        \bigl(-,\tau_B^-(u)\bigr)
        \longmapsto
        \bigl(+,\tau_B^+(u)\bigr).
\end{equation}
One period consists of
\[
\begingroup
\setlength{\arraycolsep}{8pt}
\renewcommand{\arraystretch}{1.35}
\begin{array}{c|cccc}
\text{trial} &1&2&1&\frac12\\ \hline
\text{branch} &\Wfail&\text{accept}&\Afail&\text{accept}
\end{array}
\endgroup
\]
and the full state $(x,H)$ is multiplied by $q_B(u)$ after the four trials.
\end{enumerate}

No trial point in either execution equals zero. If $(z_j^A)$ and $(z_j^B)$ denote all trial function values, then
\begin{equation}\label{eq:zrec}
        z_{j+5}^A=q_A(u)z_j^A,
        \qquad
        z_{j+4}^B=q_B(u)z_j^B.
\end{equation}
Consequently their exact trial-normalized rates are
\begin{equation}\label{eq:rates}
        \rho_A(u)=q_A(u)^{1/5},
        \qquad
        \rho_B(u)=q_B(u)^{1/4},
\end{equation}
and
\begin{equation}\label{eq:rate-order}
        \rho_B(u)<\rho_A(u).
\end{equation}
The same two rates are obtained from accepted function values when they are indexed by cumulative trial count.
\end{theorem}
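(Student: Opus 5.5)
The plan is to reduce everything to the scale-free dynamics of Proposition~\ref{prop:map}. Proposition~\ref{prop:window} decides each trial: a trial $t$ is a Wolfe failure if $t<\tstar$, an Armijo failure if $t\ge\tstar\vartheta$, and accepted otherwise. The strict chains of Lemma~\ref{lem:words} then fix the branch of every trial.

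First, the $A$-execution, starting from the state $(-,\tau_A^-)$. The initial data give $\tstar=-x_0^A/(uH_0^A)=\tau_A^-$. By \eqref{eq:wordAminus}, the trial $t=1$ satisfies $t\ge(u+1)\tau_A^-$, so it is an Armijo failure. The next trial $t=\tfrac12$ lies in $(\tau_A^-,(u+1)\tau_A^-)$ and is accepted, with $\lambda=1/(2\tau_A^-)$.

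Substituting into \eqref{eq:tstarplus} for $x<0$ gives
\[
t_{*,+}=(\lambda-1)(1+u)/\lambda=(1-2\tau_A^-)(1+u).
\]
We will check this equals $\tau_A^+$ by the rational identity $1-2\tau_A^-=3(u+2)/(4u^2+5u+4)$.

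Next, at $(+,\tau_A^+)$, \eqref{eq:wordAplus} makes the word $1$ (Wolfe fail, since $1<\tau_A^+$), then $2$ (Armijo fail, since $2>\tfrac{u+1}{u}\tau_A^+$), then $\tfrac32$ (accepted). Here $\lambda=3/(2\tau_A^+)$. The $x>0$ branch of \eqref{eq:tstarplus} should return $\tau_A^-$, which is again a direct rational identity.

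The $B$-execution is handled the same way. At $(+,\tau_B^+)$, \eqref{eq:wordBplus} gives the word $1$ (Wolfe fail) followed by $2$ (accepted). At $(-,\tau_B^-)$, \eqref{eq:wordBminus} gives $1$ (Armijo fail) followed by $\tfrac12$ (accepted). Two identities from \eqref{eq:tstarplus} then close the cycle. None of the trials equals the corresponding $\tstar$, since all inequalities are strict, so no trial point is zero.

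For the contraction factors, we multiply the two accepted-step ratios. By \eqref{eq:xplus}, the factor for $|x|$ over a period is $(\lambda_1-1)(\lambda_2-1)$. By \eqref{eq:Hplus}, the factor for $H$ is $t_1t_2u/(1+u)^2$. Because the scale-free state returns, the two factors must agree; we verify both equal $q_A$, respectively $q_B$. As a check for $B$: $t_1t_2=2\cdot\tfrac12=1$, which gives $u/(1+u)^2=q_B$. For $A$: $t_1t_2=\tfrac12\cdot\tfrac32=\tfrac34$, which gives $3u/(4(1+u)^2)=q_A$. Since the state is the same sign with the same $\tstar$ after scaling, and $(x,H)\mapsto(\gamma x,\gamma H)$ scales all trial points by $\gamma$ (proof of Proposition~\ref{prop:map}), the trial sequence of the next period is $q$ times that of the previous one. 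Because $f_u$ is positively homogeneous, this gives \eqref{eq:zrec}.

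For the rates, write $j=5m+i$ in the $A$-execution, so that $z_j=q_A^m z_i$ with $0<\min z_i\le z_i\le\max z_i$. Then $z_j^{1/j}\to q_A^{1/5}$, and \eqref{eq:rate-def} gives $\rho_A=q_A^{1/5}$; Lemma~\ref{lem:rate} shows this is sharp. All trial values are positive because no trial point is zero. The $B$ case is identical with period $4$. Accepted values form a subsequence, indexed by trial counts $5m+c$, and are bounded between constant multiples of $q_A^m$, so the same limit results.

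Finally, \eqref{eq:rate-order} is equivalent to $q_B^5<q_A^4$, that is,
\[
u^5/(u+1)^{10}<81u^4/\bigl(256(u+1)^8\bigr),
\]
i.e.\ $256u<81(u+1)^2$. This holds for all $u>0$, since the discriminant of $81u^2-94u+81$ is $94^2-4\cdot81^2<0$.

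I expect the main obstacle to be the bookkeeping of the four cycle-closing rational identities and the matching of the $|x|$ and $H$ factors; each is a routine but error-prone polynomial check. The branch determinations themselves are fully delegated to Lemma~\ref{lem:words}.
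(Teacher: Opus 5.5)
Your proposal is correct and follows essentially the same route as the paper: branch determination through the strict chains of Lemma~\ref{lem:words}, the four cycle-closing identities from \eqref{eq:tstarplus}, scale equivariance plus positive homogeneity of $f_u$ for \eqref{eq:zrec}, and positivity of $81u^2-94u+81$ for \eqref{eq:rate-order}. There are two small differences. You obtain the contraction factor from the $H$-update product $t_1t_2u/(1+u)^2$, using the fact that a returning $\tstar$ forces the $|x|$-factor to match, instead of writing out the intermediate states. Also, your trichotomy should count $t=\tstar$ as a Wolfe failure rather than an acceptance, which is harmless here because every comparison is strict.
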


\begin{proof}
At the $A$-starting state, the acceptance interval is
\[
        \bigl(\tau_A^-,(u+1)\tau_A^-\bigr).
\]
By \eqref{eq:wordAminus}, the trial $1$ fails Armijo and $1/2$ is accepted. Proposition~\ref{prop:map} gives the next state
\begin{equation}\label{eq:A1}
        (x_1^A,H_1^A)=
        \left(u\left(\frac12-\tau_A^-\right),\frac{u}{2(u+1)}\right).
\end{equation}
Its zero-crossing stepsize is
\begin{equation}\label{eq:Aidentity1}
        (1-2\tau_A^-)(u+1)=\tau_A^+.
\end{equation}
The next acceptance interval is therefore
\[
        \left(\tau_A^+,\frac{u+1}{u}\tau_A^+\right).
\]
By \eqref{eq:wordAplus}, the trials $1$, $2$, and $3/2$ give a Wolfe failure, an Armijo failure, and acceptance, respectively. A direct use of \eqref{eq:xplus}--\eqref{eq:Hplus}, together with
\begin{equation}\label{eq:Aidentity2}
        \left(1-\frac{2\tau_A^+}{3}\right)\frac{u+1}{u}=\tau_A^-,
\end{equation}
then yields
\begin{equation}\label{eq:Aclosure}
        (x_2^A,H_2^A)=q_A(u)(x_0^A,H_0^A).
\end{equation}
Thus \eqref{eq:Acycle} and the five-trial word repeat indefinitely.

For the $B$-starting state, the acceptance interval is
\[
        \left(\tau_B^+,\frac{u+1}{u}\tau_B^+\right).
\]
By \eqref{eq:wordBplus}, the trial $1$ fails Wolfe and $2$ is accepted, giving
\begin{equation}\label{eq:B1}
        (x_1^B,H_1^B)=
        \left(\tau_B^+-2,\frac{2}{u+1}\right).
\end{equation}
Its zero-crossing stepsize is
\begin{equation}\label{eq:Bidentity1}
        \left(1-\frac{\tau_B^+}{2}\right)\frac{u+1}{u}=\tau_B^-.
\end{equation}
The next acceptance interval is
\[
        \bigl(\tau_B^-,(u+1)\tau_B^-\bigr).
\]
By \eqref{eq:wordBminus}, the trial $1$ fails Armijo and $1/2$ is accepted. Using
\begin{equation}\label{eq:Bidentity2}
        (1-2\tau_B^-)(u+1)=\tau_B^+,
\end{equation}
we obtain
\begin{equation}\label{eq:Bclosure}
        (x_2^B,H_2^B)=q_B(u)(x_0^B,H_0^B).
\end{equation}
This proves \eqref{eq:Bcycle} and the four-trial word.

All comparisons in Lemma~\ref{lem:words} are strict, so none of the tested stepsizes equals the contemporaneous zero-crossing stepsize. Hence all trial function values are positive. Scale equivariance from Proposition~\ref{prop:map} now gives \eqref{eq:zrec}.

If a positive sequence satisfies $z_{j+N}=qz_j$, then writing $j=mN+r$ gives $z_j=q^m z_r$ and therefore
\[
        \lim_{j\to\infty} z_j^{1/j}=q^{1/N}.
\]
This proves \eqref{eq:rates}; the accepted-value statement follows identically after replacing the index by cumulative trial count.

Finally, equality of the two rates would imply, after taking twentieth powers,
\[
        q_A(u)^4=q_B(u)^5.
\]
Using \eqref{eq:q}, this is equivalent to
\[
        81(u+1)^2=256u,
\]
or
\[
        81u^2-94u+81=0.
\]
Its discriminant is $94^2-4\cdot81^2=-17408<0$, so the quadratic is positive for all real $u$. Hence \eqref{eq:rate-order} holds.
\end{proof}

\begin{corollary}[Infinite rational certificates]\label{cor:rational}
For every rational $u\in\Ipar$, both periodic executions in Theorem~\ref{thm:main} have rational initial states, rational trial points, and rational contraction factors. In particular, the theorem provides infinitely many explicit rational pairs of executions with different trial-normalized rates at the same value of $u$.
\end{corollary}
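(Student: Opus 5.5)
The plan is to read everything off the explicit data in Theorem~\ref{thm:main}, which is rational in $u$, and then produce infinitely many rational $u$ in $\Ipar$.

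\emph{Initial states and factors.} Fix a rational $u\in\Ipar$. The four stepsizes $\tau_A^\pm(u)$ and $\tau_B^\pm(u)$ in \eqref{eq:tauA}--\eqref{eq:tauB} are quotients of integer-coefficient polynomials in $u$. Their denominators $4u^2+5u+4$, $u^2+u+1$ and $2(u^2+u+1)$ are positive, so these values lie in $\mathbb Q$. The initial states \eqref{eq:starts} are $(-u\tau_A^-(u),1)$ and $(\tau_B^+(u),1)$, hence rational. The contraction factors $q_A,q_B$ in \eqref{eq:q} are rational functions of $u$ whose denominator $(u+1)^2$ does not vanish, so they are rational as well.

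\emph{Trial points.} I will argue inductively along each execution. By Theorem~\ref{thm:main}, every tested stepsize lies in $\{1,\tfrac12,2,\tfrac32\}\subset\mathbb Q$. A trial point is $x+tp$ with $p=-Hg(x)$ and $g(x)\in\{1,-u\}$, so it is rational whenever $(x,H)$ is. The accepted-step update \eqref{eq:xplus}--\eqref{eq:Hplus} uses only field operations on $x,H,t,u$ with nonzero denominators ($1+u$, and $\tstar=-x/p\neq0$), so rationality of $(x,H)$ propagates. Alternatively, \eqref{eq:Aclosure}, \eqref{eq:Bclosure} and the explicit intermediate states \eqref{eq:A1}, \eqref{eq:B1} give every state as a rational multiple $q^m$ times a rational vector. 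Either way all states are rational, and so are all trial points and trial function values $f_u(x+tp)$.

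\emph{Infinitely many certificates.} Since $\Ipar=(9/5,13/6)$ is a nonempty open interval, it contains infinitely many rationals, for example $9/5+1/n$ for $n\ge 3$, which lie below $13/6$ because $1/n\le 1/3<13/6-9/5=11/30$. For each such $u$, \eqref{eq:rate-order} gives $\rho_B(u)<\rho_A(u)$, so the two executions have different trial-normalized rates.

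There is no real obstacle. The only point needing care is that no denominator vanishes along the recursion, i.e.\ $\tstar\neq0$ and $u+1\neq0$. This holds because $u>0$, and because Theorem~\ref{thm:main} guarantees that no trial point is zero, so every iterate is nonzero.
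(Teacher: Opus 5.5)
Your proposal is correct and takes the same approach as the paper: all quantities in \eqref{eq:tauA}, \eqref{eq:tauB}, \eqref{eq:q}, and \eqref{eq:starts} are rational functions of $u$ with nonzero denominators on $\Ipar$, and $\Ipar$ contains infinitely many rationals. Your propagation of rationality through the accepted-step update (or via \eqref{eq:A1}, \eqref{eq:B1} and the closure identities) spells out the rationality of the trial points, which the paper's proof leaves implicit.
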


\begin{proof}
All quantities in \eqref{eq:tauA}, \eqref{eq:tauB}, \eqref{eq:q}, and \eqref{eq:starts} are rational functions of $u$ with nonzero denominators on $\Ipar$. The interval $\Ipar$ contains infinitely many rational numbers.
\end{proof}

\section{Consequence for the Lewis--Overton conjecture}\label{sec:consequence}

\begin{corollary}\label{cor:conjecture}
For every $u\in\Ipar$, there are two nonterminating initializations whose exact R-linear factors, measured by line-search trials, are the unequal values in \eqref{eq:rates}.
\end{corollary}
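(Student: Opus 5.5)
The corollary follows almost directly from Theorem~\ref{thm:main} combined with Lemma~\ref{lem:rate}. Fix $u\in\Ipar$ and any $c_2\in(0,1)$, and take the two initializations $(x_0^A,H_0^A)$ and $(x_0^B,H_0^B)$ from \eqref{eq:starts}.

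\emph{Step 1: both executions are nonterminating.} Theorem~\ref{thm:main} states that every trial point is nonzero. Hence neither execution lands on the minimizer, and each line search ends with an accepted nonzero step. The scale-free state therefore recurs forever. By the periodic structure, the cumulative trial count after the $k$th accepted step is explicit. In the $A$-execution it is $n_{2m}=5m$ and $n_{2m+1}=5m+2$. In the $B$-execution it is $n_{2m}=4m$ and $n_{2m+1}=4m+2$. In both cases $n_k\to\infty$.

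\emph{Step 2: identify the exact R-linear factor.} By the closure relations \eqref{eq:Aclosure} and \eqref{eq:Bclosure}, together with scale equivariance, the accepted values satisfy
\[
v_{k+2}=q\,v_k, \qquad q=q_A(u) \text{ or } q_B(u).
\]
Over the same two accepted steps the trial count grows by $N=5$ or $N=4$. Write $v_{2m+r}=q^m v_r$ with $n_{2m+r}=Nm+n_r$. Then
\[
v_k^{1/n_k}=q^{m/(Nm+n_r)}\,v_r^{1/(Nm+n_r)}\longrightarrow q^{1/N}.
\]
So the $\limsup$ in \eqref{eq:rate-def} is an actual limit, equal to $\rho_A(u)$ or $\rho_B(u)$ as in \eqref{eq:rates}. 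Lemma~\ref{lem:rate} then identifies this limit with the infimum of admissible R-linear factors measured in trials, which is exactly the Lewis--Overton notion. The same conclusion holds if one uses all trial values $z_j$ with $n_j=j$, via \eqref{eq:zrec}.

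\emph{Step 3: the rates differ.} Inequality \eqref{eq:rate-order} gives $\rho_B(u)<\rho_A(u)$. So the two initializations have unequal exact factors at the same $u$, which is the corollary.

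The only point needing care is matching the rate convention: the index must be the cumulative trial count along accepted iterates rather than the iterate index. Step 2 settles this by showing the limit exists and is insensitive to the bounded offset $n_r$. No other obstacle arises.
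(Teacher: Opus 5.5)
Your proof is correct and follows the paper's argument: Theorem~\ref{thm:main} supplies the two nonterminating executions and their rates, Lemma~\ref{lem:rate} identifies these rates as the infimal R-linear factors under the trial clock, and \eqref{eq:rate-order} shows they differ. Your explicit cumulative trial counts ($n_{2m}=5m$, $n_{2m+1}=5m+2$ for $A$; $n_{2m}=4m$, $n_{2m+1}=4m+2$ for $B$) spell out the theorem's final sentence about accepted values; one small caveat is that the index should start at $k\ge1$, so that $n_k>0$ as the rate definition requires.
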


\begin{proof}
Fix $u\in\Ipar$. Theorem~\ref{thm:main} gives two nonterminating executions for the same objective $f_u$ and the same line-search parameters. By Lemma~\ref{lem:rate}, their rates in \eqref{eq:rates} are the infimal R-linear factors under the trial clock. Since \eqref{eq:rate-order} shows that these factors are different, no rate depending only on $u$ can equal the exact trial-normalized rate of every nonterminating initialization. The initialization-independent rate assertion in Lewis and Overton's Conjecture~4.1~\cite{LewisOverton2008} is false.
\end{proof}

The construction identifies the mechanism behind this dependence: the doubling--bisection line search admits distinct periodic words in the scale-free state $(\operatorname{sign}x,\tstar)$, and each word carries its own contraction per trial. This reduction turns the rate question into a finite calculation on explicit rational maps and suggests that other periodic words can be studied by the same approach.

\bibliographystyle{plainnat}
\bibliography{references}

\end{document}